\newtheorem{theorem}{Theorem}[section]
\newtheorem{conjecture}[theorem]{Conjecture}
\newtheorem{corollary}[theorem]{Corollary}
\newtheorem{definition}[theorem]{Definition}
\newtheorem{example}[theorem]{Example}
\newtheorem{remark}[theorem]{Remark}
\newtheorem{question}[theorem]{Question}
\newtheorem{lemma}[theorem]{Lemma}
\newsavebox{\hold}
\newlength{\holdht}
\begin{document}

\title[Best proximity pairs in ultrametric spaces]{Best proximity pairs in ultrametric spaces}
\author[K. Chaira, O. Dovgoshey and S. Lazaiz]{Karim Chaira$^1$, Oleksiy Dovgoshey$^2$ and Samih Lazaiz$^{3}$}
\address{$^1$Ben M'sik Faculty of Sciences \\ Hassan II University, Casablanca, Morocco}
\email{chaira\_karim@yahoo.fr}
\address{$^2$Institute of Applied Mathematics and Mechanics of NASU\\ Dobrovolskogo str. 1, Slovyansk 84100, Ukraine}
\email{oleksiy.dovgoshey@gmail.com}
\address{$^3$ENSAM Casablanca, Hassan II University, Casablanca, Morocco}
\email{samih.lazaiz@gmail.com}
\subjclass[2010]{Primary: 37C25. Secondary: 54E50; 41A50}

\keywords{Best proximity point; best proximity pair; fixed point; nonexpansive mapping; ultrametric space} 

\begin{abstract}
In the present paper, we study the existence of best proximity pairs in ultrametric spaces. We show, under suitable assumptions, that the proximinal pair $(A,B)$ has a best proximity pair. As a consequence we generalize a well known best approximation result and we derive some fixed point theorems. Moreover, we provide examples to illustrate the obtained results.

\end{abstract}

\maketitle

\section{Introduction and preliminaries}

Let $F:A\rightarrow B$ be a map where $A$ and $B$ are two nonempty subsets of a metric space $M$. Recall that a point $x\in M$ is called a fixed point of a $F$ if $F(x)=x$. It is known that such an equation does not always have a solution. However, in the absence of the fixed point (for example if $A\cap B=\varnothing$), it is possible to consider the problem of finding a point $x\in A$ that is as close as possible to $F(x)$ in $B$; i.e., to minimize the quantity $d(x,F(x))$ over $A$. This type of problem is considered as part of approximation theory, more specifically best approximation point results. 

\begin{definition}\cite{ref12}\label{def11}
Let $(M, d)$ be a metric space. A subset $A\subset M$ is said to be proximinal if given any $x\in M$ there exists $a_0\in A$ such that 
$$d(x,a_0)=dist(x,A)=\inf\{d(x,z):\; z\in A\}.$$
Such an $a_0$, if it exists, is called a best approximation to $x$ in $A$.
\end{definition}

In the literature, some positive results concerning the existence of the best approximation points were given whether in the archimedean or non-archimedean spaces, for more details see \cite{ref9,ref10,ref11}. 

\medskip 
Recall that, for subsets $A$ and $B$ of a metric space $(M,d)$, we set
$$dist(A,B)=\inf\{d(a,b):\;a\in A\;\text{and}\;b\in B\},$$
and write $\delta(A)$ for the diameter of $A$, i.e., $\delta(A):=\sup\{d(x,y)\; : \;x,y\in A \}.$

\medskip 
It should be noted here that if $A$ and $B$ are proximinal subsets of a metric space $(M, d)$, then, in general, there is no reason to have $d(x, y) = dist(A, B)$ for $x \in A$ and $y \in B$ (see Example \ref{CEx} below). This led the authors in \cite{ref12} to introduce the notion of the best proximity pair.

\begin{definition}\cite[Definition 1.1]{ref12}\label{def12}
Let $M$ be a metric space and let $A$ and $B$ be nonempty subsets of $M$. Let
$$A_0=\{x\in A:\; d(x, y)=dist(A, B)\; \text{for some}\; y\in B\};$$
$$B_0=\{y\in B:\; d(x, y)=dist(A, B)\; \text{for some}\; x\in A\}.$$

A pair $(x, y) \in A_0 \times B_0$ for which $d(x, y)=dist(A, B)$ is called a best proximity pair for the sets $A$ and $B$.
\end{definition} 

Let $A$ and $B$ be subsets of a metric space $M$. We will say that the pair $(A, B)$ is proximinal if $A$ and $B$ are proximinal. The following question naturally arises.

\begin{question}\label{Q1}
Let $(A,B)$ be a proximinal pair of $(M,d)$. Does there exists a best proximity pair $(a,b)\in A_0\times B_0$ ? If so, is the pair $(A_0,B_0)$ also proximinal ?
\end{question}

If this question has an affirmative answer, this inspires to formulate the following:

\begin{question}\label{Q2}
Given a mapping $F:A\cup B \rightarrow A\cup B$ with $F(A)\subset A$ and $F(B)\subset B$ (also called noncyclic mapping), does there exists an ordered pair $(a,b)\in A\times B$ such that 
$$
F(a)=a, \quad F(b)=b\quad \text{and}\quad d(a,b)=dist(A,B).
$$
\end{question}

There is an extensive literature contains partial affirmative answers to these two questions in the context of metric spaces and linear spaces (see \cite{ref13,ref14,ref15}). To the best knowledge of the authors, this is the first time these questions are considered in the case of ultrametric spaces. 

\medskip
Recall that an ultrametric space is a metric space $M$ in which strong triangle inequality $d(x,y) \leq \max\{d(x,z),d(z, y)\}$ is satisfied for all $x,y,z \in M$. 

\begin{remark}
It is interesting to note that an axiomatic characterization of proximity spaces generated by ultrametrics was obtained by A. Yu. Lemin in 1984 (see \cite{ref200}).
\end{remark}

\begin{definition}\cite{ref22}
An ultrametric space $(M,d)$ is called spherically complete if each nested sequence of closed balls $B_1 \supset  B_2 \supset\cdots$  has a nonempty intersection.
\end{definition}

The spherical complete ultrametric spaces were first introduced by Ingleton \cite{ref23} in order to obtain an analog of the Hanh-Banach theorem for non-Archimedean valued fields. This notion has numerous applications in studies of fixed point results for ultrametric spaces \cite{ref24,ref3,ref17}. It was shown by Bayod and Mart\`inez-Maurica \cite{ref25} that an ultrametric space is spherically complete if and only if this space is ultrametrically injective. Recall that an ultrametric space $(Y,\rho)$ is ultrametrically injective if for each $F : A \rightarrow X$, where $A \subseteq Y$ and $X$ is a space with an ultrametric $d$, the condition
$$d(F (x), F (y)) \leq\rho(x, y),\qquad \forall x, y \in A$$

implies the existence of an extension $\tilde{F}: Y \rightarrow X$ of the mapping $F$ such that
$$d(\tilde{F} (x), \tilde{F} (y)) \leq\rho(x, y),\qquad \forall x, y \in Y.$$

Thus, an ultrametric space is ultrametrically injective if every contractive mapping from this space to arbitrary ultrametric space has a contractive extension. Some interesting results related to spherical completeness of ultrametric spaces can also be found in \cite{ref26} and \cite{ref27}.

\medskip
In \cite{ref8} the authors prove the following result.

\begin{theorem}\label{KS1}
  Let $A$ be a nonempty spherically complete subspace of an ultrametric space $M$. Then $A$ is proximinal in $M$.
\end{theorem}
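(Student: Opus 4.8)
The plan is to prove that a spherically complete subspace $A$ is proximinal by fixing an arbitrary $x \in M$ and constructing a nested sequence of closed balls centered at points of $A$ whose radii decrease toward $\operatorname{dist}(x,A)$, then using spherical completeness to extract a point in their intersection that realizes the distance.

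Let me think carefully about how to set this up. We want to find $a_0 \in A$ with $d(x, a_0) = \operatorname{dist}(x, A) =: r$. If $x \in A$ the claim is trivial (take $a_0 = x$), so assume $x \notin A$ and $r > 0$. The idea is to approximate: choose a sequence $a_n \in A$ with $d(x, a_n) \to r$. The key feature of ultrametric spaces I want to exploit is the "all triangles are isosceles" phenomenon: if $d(x, a_n)$ and $d(x, a_m)$ are both close to $r$ but one is strictly smaller, the strong triangle inequality forces $d(a_n, a_m) = \max\{d(x,a_n), d(x,a_m)\}$. This rigidity is what makes the balls nest cleanly.

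Here is the construction I would carry out. For each $n$, choose $a_n \in A$ with $r \le d(x, a_n) < r + \tfrac{1}{n}$ (or along any sequence $r_n \downarrow r$). Consider the closed balls $B_n := \overline{B}(a_n, d(x,a_n))$ taken within $A$, i.e. $\{a \in A : d(a_n, a) \le d(x, a_n)\}$. Note $x$ lies in each ball in $M$, and by the isosceles property each ball contains the relevant approximating points of $A$. After passing to a subsequence so that $d(x, a_n)$ is nonincreasing, I would verify $B_{n+1} \subseteq B_n$: given $a \in B_{n+1}$, combine $d(a, a_{n+1}) \le d(x, a_{n+1}) \le d(x, a_n)$ with $d(a_{n+1}, a_n) \le \max\{d(x, a_{n+1}), d(x, a_n)\} = d(x, a_n)$ via the strong triangle inequality to get $d(a, a_n) \le \max\{d(a, a_{n+1}), d(a_{n+1}, a_n)\} \le d(x, a_n)$, so $a \in B_n$. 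This gives a nested sequence of nonempty closed balls in the spherically complete space $A$, hence $\bigcap_n B_n \ne \varnothing$; pick $a_0$ in the intersection.

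Finally I would check $d(x, a_0) = r$. Since $a_0 \in A$, automatically $d(x, a_0) \ge r$. For the reverse, $a_0 \in B_n$ means $d(a_0, a_n) \le d(x, a_n)$, so $d(x, a_0) \le \max\{d(x, a_n), d(a_n, a_0)\} \le d(x, a_n) < r + \tfrac{1}{n}$ for every $n$, forcing $d(x, a_0) \le r$. The main obstacle I anticipate is arranging the balls so they are genuinely nested and nonempty simultaneously: spherical completeness is stated for closed balls of the ambient space, so I must be careful that the balls I use are legitimately closed balls (centered at points of $A$, which they are), that each is nonempty (each contains its own center $a_n \in A$), and that the nesting is strict enough to invoke the hypothesis. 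The strong triangle inequality is precisely the tool that turns the crude metric approximation into clean ball-nesting, and getting the radii and subsequence bookkeeping right is where I would spend the most care.
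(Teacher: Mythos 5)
Your argument is correct, and it is worth noting that the paper itself gives no proof of this statement: Theorem \ref{KS1} is quoted from Mart\'inez-Maurica and Pell\'on \cite{ref8}, so there is no in-paper proof to compare against. What you wrote is the standard argument and it holds up: the minimizing sequence $a_n$ with $d(x,a_n)<r+\tfrac1n$, the nonincreasing subsequence, the nesting check via the strong triangle inequality, and the final two-sided estimate $r\le d(x,a_0)\le d(x,a_n)<r+\tfrac1n$ are all sound, and the balls you use are genuine closed balls of the subspace $A$ (nonempty, centered in $A$), so spherical completeness of $A$ applies exactly as stated. The only cosmetic slip is the phrase ``assume $x\notin A$ and $r>0$'': you do not justify that $x\notin A$ forces $r>0$ (it does, because spherical completeness implies completeness and hence closedness of $A$, but that needs a word), and in fact your construction never uses $r>0$ at all --- the radii $d(x,a_n)$ are positive as soon as $x\notin A$, and the conclusion $d(x,a_0)\le r$ goes through for $r=0$ as well, so you could simply delete that assumption.
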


The next definition slightly narrows the corresponding definition from \cite{ref3}.

\begin{definition}\rm\label{Def18}
Let $(M,d)$ be an ultrametric space. Assume that $F:M\rightarrow M$ is a map and $B=B(x,r)$, $r>0$, is a closed ball in $(M,d)$. We say that $B$ is \textit{a minimal $F$-invariant ball} if : 
\begin{itemize}
  \item[(i)] $F(B)\subseteq B$, and
  \item[(ii)] $d(y,F(y))=r$ for each $y\in B$. 
\end{itemize}
\end{definition}

\begin{remark}
Definition \ref{Def18} implies, in particular, that any minimal $F$-invariant ball $B$ cannot have the form $\{p\}$, where $p$ is a fixed point of $F$ and $\{p\}$ is the one-point set consisting the only element $p$.
\end{remark}

The following definition is well known.

\begin{definition}
Let $A\subseteq M$. $F:A\rightarrow A$ is said to be nonexpansive if  
\[
d(F(x),F(y))\leq d(x,y)\quad \text{for all}\; x,y\in A
\]
\end{definition}

\begin{remark}\label{Rem19}
Let $(M, d)$ be a spherically complete ultrametric space, let $F : M \rightarrow M$ be non-expensive, and let $\mathbf{B}_T$ be the set of all closed balls $B$ which are $F$-invariant, 
$$F(B) \subseteq B.$$
Then $B\in\mathbf{B}_T$ is minimal $F$-invariant if and only if $B_1 \subseteq B$ implies $B_1 = B$ for every $B_1 \in \mathbf{B}_T$.
\end{remark}

The next theorem is one of the most significant results of the fixed point theory in ultrametric spaces.

\begin{theorem}\label{KS}\cite{ref3,ref2}
Suppose $M$ is a spherically complete ultrametric space and $F:M\rightarrow M$ is a nonexpansive map. Then every closed ball 
$$B(x, d(x, T (x))) = \{y \in M : d(x, y) \leq d(x, T (x))\}$$
contains either a fixed point of $F$ or a minimal $F$-invariant ball. 
\end{theorem}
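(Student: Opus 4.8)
The plan is to produce the required ball through a Zorn's lemma argument applied to the family of $F$-invariant closed subballs of $B_0 := B(x, d(x, F(x)))$ (reading $T = F$ in the statement), and then to analyze a minimal such ball by means of the strong triangle inequality.

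First I would record a stability fact: \emph{for every $z \in M$ the ball $B(z, d(z, F(z)))$ is $F$-invariant}. Indeed, if $d(z, w) \le d(z, F(z))$ then nonexpansiveness gives $d(F(z), F(w)) \le d(z,w) \le d(z, F(z))$, and the strong triangle inequality yields $d(z, F(w)) \le \max\{d(z, F(z)), d(F(z), F(w))\} \le d(z, F(z))$, so $F(w) \in B(z, d(z, F(z)))$. In particular $B_0$ is $F$-invariant, so the family $\mathbf{B}$ of all closed balls $B \subseteq B_0$ with $F(B) \subseteq B$ is nonempty.

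Next I would order $\mathbf{B}$ by inclusion and verify the hypothesis of Zorn's lemma for lower bounds. Given a chain $\mathcal{C} = \{B_i\}_{i\in I}$ in $\mathbf{B}$, spherical completeness provides a point $p \in \bigcap_i B_i$. Since every point of an ultrametric ball is a center and the diameter of a ball does not exceed its radius, from $p, F(p) \in B_i$ one gets $d(p, F(p)) \le \delta(B_i)$ and hence $B(p, d(p, F(p))) \subseteq B_i$ for every $i$; by the stability fact this ball lies in $\mathbf{B}$ and is a lower bound for $\mathcal{C}$. Zorn's lemma then yields a minimal element $B^* \in \mathbf{B}$. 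Finally I would identify $B^*$. For each $y \in B^*$ the ball $B(y, d(y, F(y)))$ belongs to $\mathbf{B}$; writing $B^* = B(y, r^*)$ and noting $d(y, F(y)) \le \delta(B^*) \le r^*$, it is contained in $B^*$, so minimality forces $B(y, d(y, F(y))) = B^*$ for every $y \in B^*$. From these equalities, for $y, z \in B^*$ one reads off $d(y, z) \le d(z, F(z))$ and $d(z, F(y)) \le d(z, F(z))$, whence $d(y, F(y)) \le \max\{d(y,z), d(z, F(y))\} \le d(z, F(z))$; by symmetry $d(y, F(y)) = d(z, F(z))$. Thus $d(\cdot, F(\cdot)) \equiv \rho$ is constant on $B^*$ and $B^* = B(y, \rho)$ for each $y$. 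If $\rho = 0$ then $B^* = \{y\}$ with $F(y) = y$, a fixed point of $F$ inside $B_0$; if $\rho > 0$ then $B^*$ satisfies conditions (i)--(ii) of Definition \ref{Def18}, i.e. $B^*$ is a minimal $F$-invariant ball.

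I expect the main obstacle to be the lower-bound step: guaranteeing that the intersection of a (possibly uncountable) chain of $F$-invariant balls is nonempty, which is precisely where spherical completeness is essential and where one must ensure that the stated nested-sequence form of spherical completeness is being applied to genuine chains of closed balls (these are automatically nested by the ultrametric dichotomy that any two balls are either disjoint or comparable). The constancy argument for $d(\cdot, F(\cdot))$ on $B^*$ is short but is the conceptual heart, since it is what converts Zorn-minimality into the metric condition (ii) of Definition \ref{Def18}.
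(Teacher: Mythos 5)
Your argument is correct. Note that the paper itself gives no proof of this statement---it is quoted from the cited references of Kirk--Shahzad and Petalas--Vidalis---and your Zorn's-lemma argument on the family of $F$-invariant closed balls inside $B(x,d(x,F(x)))$, followed by the strong-triangle-inequality computation showing $d(\cdot,F(\cdot))$ is constant on a minimal such ball, is essentially the standard proof from those sources. The one point you flag, passing from the paper's nested-\emph{sequence} form of spherical completeness to arbitrary chains, is handled in the usual way: pick a decreasing sequence $B_{i_1}\supseteq B_{i_2}\supseteq\cdots$ from the chain with diameters tending to the infimum of the diameters, take $p$ in its intersection, and check that any member $B$ of the chain not containing some $B_{i_n}$ satisfies $\delta(B)=\inf_n\delta(B_{i_n})$ and hence contains $p$ because every point of $B$ is a center; so no gap remains.
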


Therefore, to guarantee the existence of a fixed point of $F : M \rightarrow M$, it is necessary to add some additional restrictions. For example, we can consider strictly contractive mappings instead of contractive one.

\begin{definition}
Let $(M,d)$ be an ultrametric space and $F : M \rightarrow M$ a mapping. We say that :
\begin{enumerate}
  \item $F$ is strictly contractive if $d(F(x), F(y)) < d ( x , y )$ whenever $x\neq y$.
  \item $F$ is strictly contractive on orbit if $F(x)\neq x$ implies $d( F^2x, F(x)) < d ( F(x), x )$ for each $x \in M$.
\end{enumerate}
\end{definition} 

Using the following theorem, we can prove the existence of fixed points for every strictly contractive on orbit mapping $F : M \rightarrow M$ of each spherically complete ultrametric space $(M, d)$. 

\begin{theorem}\label{w-regular}
Let $(M,d)$ be a spherically complete ultrametric space.  Let $F:M\rightarrow M$ be a nonexpansive map satisfying 
\begin{equation}\label{eq1}
  \liminf_{n\rightarrow \infty}\ d(F^n(x),F^{n+1}(x))< d(x,F(x))
\end{equation}
whenever $x$ in $M$ and $x \neq F(x)$.  Then $F$ has a fixed point in any $F$-invariant closed ball.
\end{theorem}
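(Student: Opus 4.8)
The plan is to invoke Theorem~\ref{KS} and show that hypothesis \eqref{eq1} forbids the alternative in which a minimal $F$-invariant ball appears, so that the fixed-point alternative must always hold. Write the given $F$-invariant closed ball as $C=B(c,R)$ and fix an arbitrary point $x_0\in C$. If $F(x_0)=x_0$ we are done, so assume $x_0\neq F(x_0)$ and set $r:=d(x_0,F(x_0))>0$. Since $C$ is $F$-invariant, both $x_0$ and $F(x_0)$ lie in $C$, whence $r\leq\delta(C)\leq R$; the strong triangle inequality then gives $B(x_0,r)\subseteq B(c,R)=C$, because any $y$ with $d(y,x_0)\leq r$ satisfies $d(y,c)\leq\max\{d(y,x_0),d(x_0,c)\}\leq\max\{r,R\}=R$. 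This containment is what will guarantee that the fixed point eventually produced actually lies in $C$.

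Next I would apply Theorem~\ref{KS} to the ball $B(x_0,r)=B(x_0,d(x_0,F(x_0)))$: it contains either a fixed point of $F$ or a minimal $F$-invariant ball. The key step is to eliminate the second possibility using \eqref{eq1}. Suppose $B'=B(z,\rho)$ were a minimal $F$-invariant ball, so that $\rho>0$, $F(B')\subseteq B'$, and $d(y,F(y))=\rho$ for every $y\in B'$ by condition (ii) of Definition~\ref{Def18}. Pick any $y\in B'$. Invariance gives $F^n(y)\in B'$ for all $n\geq 0$, and applying condition (ii) to each iterate yields $d(F^n(y),F^{n+1}(y))=\rho$ for every $n$. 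Consequently $\liminf_{n\to\infty}d(F^n(y),F^{n+1}(y))=\rho=d(y,F(y))$, while $\rho>0$ forces $y\neq F(y)$; this directly contradicts \eqref{eq1}. Hence no minimal $F$-invariant ball can exist anywhere in $M$.

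Therefore the only surviving alternative is that $B(x_0,r)$ contains a fixed point of $F$, and by the containment established above this fixed point lies in $C$, completing the argument. I expect the main obstacle to be conceptual rather than computational: recognizing that the defining property of a minimal $F$-invariant ball makes the orbit-difference sequence $\big(d(F^n(y),F^{n+1}(y))\big)_n$ identically equal to the radius, which is precisely the situation that \eqref{eq1} is designed to exclude. The only mildly technical point is the ultrametric verification that $B(x_0,d(x_0,F(x_0)))\subseteq C$, which is needed to localize the fixed point inside the prescribed invariant ball.
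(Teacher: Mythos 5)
Your proposal is correct and follows essentially the same route as the paper: apply Theorem~\ref{KS} and use the fact that every point $y$ of a minimal $F$-invariant ball of radius $\rho$ has its whole orbit inside that ball with $d(F^n(y),F^{n+1}(y))=\rho$ for all $n$, so the $\liminf$ in \eqref{eq1} equals $d(y,F(y))$, a contradiction. Your write-up is in fact slightly more careful than the paper's, since you explicitly verify the containment $B(x_0,d(x_0,F(x_0)))\subseteq C$ needed to localize the fixed point in the prescribed invariant ball.
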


\begin{proof}
Suppose that $B$ is a $F$-invariant closed ball which does not contain a fixed point of $F$, then, by Theorem \ref{KS}, there are $r > 0$ and $x \in B$ such that
$$0<r=d(x,F(x))=d(F(x),F^2(x))=\ldots=d(F^n(x),F^{n+1}(x))< d(x,F(x)) $$
contrary to (\ref{eq1}).
\end{proof}

It should be noted here that introduced in \cite{ref18} the mappings $F : M \rightarrow M$ satisfying the weak regular property are also satisfy conditions of Theorem \ref{w-regular}.

\begin{conjecture}
Let $(M, d)$ be a spherically complete ultrametric space and let a mapping $F:M\rightarrow M$ be nonexpansive. Then the following conditions are equivalent:
\begin{itemize}
  \item[(i)] $F$ has a fixed point in every nonempty spherically complete subspace of $M$.
  \item[(ii)] Inequality (\ref{eq1}) holds whenever $x\in M$ and $x\neq F (x)$.
\end{itemize}
\end{conjecture}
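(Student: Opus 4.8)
The plan is to treat the two implications separately, since they have very different characters: $(ii)\Rightarrow(i)$ should follow almost immediately from Theorem \ref{w-regular}, while $(i)\Rightarrow(ii)$ carries the real content. Throughout I read condition $(i)$ as asserting that for every nonempty spherically complete subspace $N\subseteq M$ with $F(N)\subseteq N$ the map $F$ has a fixed point in $N$; without the invariance requirement $F(N)\subseteq N$ a one-point subspace $\{y\}$ with $F(y)\neq y$ would already violate $(i)$, so this is the only sensible reading.

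For $(ii)\Rightarrow(i)$, fix such an $N$. Then $F|_N\colon N\to N$ is a nonexpansive self-map of the spherically complete space $N$, and for $x\in N$ with $F(x)\neq x$ the whole orbit stays in $N$, so $\liminf_n d(F^n(x),F^{n+1}(x))<d(x,F(x))$ by $(ii)$; that is, $F|_N$ satisfies \eqref{eq1} on $N$. Choosing any $x_0\in N$, the closed ball $B_N(x_0,d(x_0,F(x_0)))$ is nonempty and $F|_N$-invariant (by nonexpansiveness and the strong triangle inequality), so Theorem \ref{w-regular}, applied to $F|_N$ on $N$, produces a fixed point there. Hence $(i)$ holds, and this direction is essentially a corollary.

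For $(i)\Rightarrow(ii)$ I would argue by contraposition. If \eqref{eq1} fails at some $x$ with $x\neq F(x)$, set $r_n=d(F^n(x),F^{n+1}(x))$ and $r_0=d(x,F(x))>0$. Nonexpansiveness makes $(r_n)$ nonincreasing, so $\liminf_n r_n=\lim_n r_n=\inf_n r_n$; the failure of \eqref{eq1} forces this common value to be $\geq r_0$, whence $r_n=r_0$ for all $n$. Thus the orbit $O=\{F^n(x):n\geq 0\}$ is an $F$-invariant set on which every point moves the fixed positive distance $r_0$, so $O$ contains no fixed point of $F$. If one could certify that $O$ (or some $F$-invariant subspace extracted from it) is spherically complete, then $(i)$ would hand back a fixed point in it, a contradiction, and the proof would be complete.

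The hard part is exactly this last step, and I expect it to be the main obstacle. The closed ball $B(x,r_0)$ is $F$-invariant and, being a closed ball in a spherically complete ultrametric space, is again spherically complete; but applying $(i)$ to it yields only a fixed point $p$ with $d(F^n(x),p)=r_0$ for all $n$, which is entirely consistent with the constant-step orbit and produces no contradiction. Shrinking toward a minimal $F$-invariant ball (Theorem \ref{KS}, Remark \ref{Rem19}) is blocked from the other side: such a ball would be spherically complete and, by Definition \ref{Def18}(ii), fixed-point-free, so under $(i)$ no minimal $F$-invariant ball can exist — which is precisely why Theorem \ref{KS} is forced to return the fixed-point alternative for $B(x,r_0)$. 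The crux is therefore to secure spherical completeness of the genuinely fixed-point-free invariant set $O$, whose closure or completion may in principle acquire a fixed point, and overcoming this tension is where the difficulty of the conjecture lies.
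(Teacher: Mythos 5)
The statement you are addressing is labelled a \emph{conjecture} in the paper: the authors supply no proof of it, so there is no ``paper's own proof'' to compare against, only the surrounding machinery. Your reading of (i) --- restricting attention to $F$-invariant spherically complete subspaces --- is the only one under which the statement is not trivially false (a singleton $\{y\}$ with $F(y)\neq y$ would otherwise refute (i) for every non-identity map), and your proof of $(ii)\Rightarrow(i)$ is correct and is exactly the intended use of Theorem \ref{w-regular}: the ball $B_N(x_0,d(x_0,F(x_0)))$ is $F|_N$-invariant by nonexpansiveness and the strong triangle inequality, and Theorem \ref{w-regular} applied inside the spherically complete space $N$ returns a fixed point in $N$.

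However, the proposal does not prove the statement, and you say so yourself: the implication $(i)\Rightarrow(ii)$ is left open. Your reduction is correct --- nonexpansiveness makes $r_n=d(F^n(x),F^{n+1}(x))$ nonincreasing, so the failure of \eqref{eq1} forces $r_n=r_0>0$ for all $n$ --- and your diagnosis of why the obvious candidates fail is accurate: $B(x,r_0)$ is spherically complete and $F$-invariant, but applying (i) to it only yields a fixed point $p$ with $d(F^n(x),p)\leq r_0$, which coexists peacefully with a constant-step orbit; and a minimal $F$-invariant ball, being fixed-point-free by Definition \ref{Def18}, is precisely what (i) forbids, so Theorem \ref{KS} produces no contradiction either. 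The missing step is the entire content of the conjecture: one must manufacture, from the failure of \eqref{eq1} at some $x$, a nonempty spherically complete $F$-invariant subspace containing no fixed point. The orbit $O=\{F^n(x):n\geq 0\}$ is $F$-invariant and fixed-point-free, but it is spherically complete only in special configurations (for instance when it is equilateral, so that every closed ball in $O$ is a singleton or all of $O$); in general its completion or closure may acquire a fixed point. Since the paper records this equivalence as open, your argument should be presented as a proof of $(ii)\Rightarrow(i)$ together with a reduction of the converse to this spherical-completeness question, not as a proof of the conjecture.
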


\medskip
In this paper, we will see that the answer to the above questions is positive under natural assumptions. In the lack of this assumptions, we show by an example that $(A_0,B_0)$ may be an empty pair. As a consequence we generalize a best approximation result due to Kirk and Shahzad (see \cite[Theorem 11]{ref3}) and we derive some fixed point theorems. Throughout this paper, we provide some examples to illustrate the obtained results. Our main theorems continue and strenghten the corresponding results from \cite{CL}.

\section{Main results}

We first give a useful lemma.

\begin{lemma}\label{Lemma1}
Let $(A,B)$ be a proximinal pair of a nonempty ultrametric space $(M, d)$ and let the inequality $\delta(B)\leq dist(A,B)$ hold. Then $A_0$  is nonempty and the equality $B_0= B$ holds.
\end{lemma}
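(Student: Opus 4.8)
The plan is to prove the single statement that $B \subseteq B_0$; both assertions of the lemma then follow at once, since $B_0 \subseteq B$ holds by definition, and since the witnessing point of $A$ attached to any $b \in B_0$ automatically lands in $A_0$. Throughout write $D := dist(A,B)$, which is a finite nonnegative number because $A$ and $B$ are nonempty (a proximinal subset of a nonempty space cannot be empty).

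First I would fix an arbitrary $b \in B$ and argue that $dist(b,A) = D$. The inequality $dist(b,A) \geq D$ is immediate from the definitions, so the real content is the reverse inequality, and this is exactly the step where both the strong triangle inequality and the hypothesis $\delta(B) \leq D$ enter. Given $\varepsilon > 0$, the definition of $D$ as an infimum yields $a \in A$ and $b' \in B$ with $d(a,b') < D + \varepsilon$. Applying the strong triangle inequality, and then the bound $d(b',b) \leq \delta(B) \leq D$, gives
$$
d(a,b) \leq \max\{d(a,b'),\, d(b',b)\} \leq \max\{D+\varepsilon,\, D\} = D + \varepsilon,
$$
whence $dist(b,A) \leq d(a,b) \leq D + \varepsilon$. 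Letting $\varepsilon \to 0$ produces $dist(b,A) \leq D$, and combined with the trivial lower bound this yields $dist(b,A) = D$.

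It then remains to invoke the proximinality of $A$: applied to the point $b$, it produces a best approximation $a_0 \in A$ with $d(a_0,b) = dist(b,A) = D = dist(A,B)$. Thus $(a_0,b)$ is a best proximity pair, so $b \in B_0$ and $a_0 \in A_0$. Since $b \in B$ was arbitrary, $B \subseteq B_0$, giving $B_0 = B$, and the existence of even a single such $a_0$ shows that $A_0 \neq \varnothing$.

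I expect the only genuine obstacle to be the reverse inequality $dist(b,A) \leq D$ in the middle paragraph; everything else is bookkeeping. The point to watch is that the near-optimal pair $(a,b')$ supplied by the infimum need not satisfy $b' = b$, which is precisely why the diameter hypothesis is required: it allows the ultrametric inequality to relay the near-optimal distance from $b'$ across to the fixed point $b$ without loss, a device that would fail for a general (archimedean) metric.
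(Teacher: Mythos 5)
Your proof is correct and follows essentially the same route as the paper: both use the strong triangle inequality together with $\delta(B)\leq dist(A,B)$ to show $dist(b,A)=dist(A,B)$ for every $b\in B$, and then invoke the proximinality of $A$ to produce the witness $a_0$. The only cosmetic difference is that you run an $\varepsilon$-argument at the level of the infimum, whereas the paper first establishes the exact pointwise identity $d(a,b)=d(a,b')$ for all $a\in A$ and $b,b'\in B$ and passes to the infimum afterwards.
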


\begin{proof}
Let us consider arbitrary $a \in A$ and $b, b^\prime \in B$. We claim that the equality
\begin{equation}\label{eqq1}
  d(a,b)=d(a,b^\prime)
\end{equation}
holds. Indeed, the strong triangle inequality and the inequality $\delta(B)\leq dist(A,B)$ imply that
$$
\begin{array}{ccl}
  d(a,b^\prime) & \leq & \max\{d(a,b);d(b,b^\prime) \}\\
                & \leq & \max\{d(a,b);\delta(B) \}\\
                & \leq & \max\{d(a,b);dist(A,B) \}\\
                & \leq & d(a,b).
\end{array}
$$

Thus, $d(a,b^\prime)\leq d(a, b)$ holds. Similarly, we obtain $d(a, b) \leq d(a,b^\prime)$. Equality (\ref{eqq1}) follows.

\medskip
Now we can easily prove that $B_0 = B$ and $A_0 \neq\varnothing$. Indeed, let $b^\prime$ be an arbitrary point of $B$. Then, using equality (\ref{eqq1}), we obtain

\begin{equation}\label{eq2}
  dist(A, B) =\inf_{a\in A,b\in B} d(a, b) = \inf_{a\in A} d(a, b^\prime) = dist(b^\prime, A).
\end{equation}
Since $A$ is proximinal, there is $a^\prime\in A$ such that $dist(b^\prime , A) = d(a^\prime, b^\prime)$. The last equality
and (\ref{eq2}) imply
$$d(a^\prime, b^\prime)=dist(A,B).$$
Thus, for every $b^\prime\in B$ there is $a^\prime \in A$ such that (\ref{eqq1}) holds. It implies $A_0\neq\varnothing$ and $B_0 = B$.
\end{proof}

Next, we give an example to show that if $\delta(B)>dist(A,B)$, the pair $(A_0,B_0)$ may be an empty pair.

\begin{example}\label{CEx}
Let $M=\mathbb{N}_0$ be the set of positive integers and define the ultrametric distance $d$ on $M$ as follows:
\[
d(n,m)=\begin{cases}
  \begin{array}{ll}
    0 & \text{if}\; n=m\\
    \max\{\frac{1}{n},\frac{1}{m}\} & \text{otherwise.}
  \end{array}
\end{cases}
\]
Then $(M,d)$ is an ultrametric space. Set $A=2\mathbb{N}_0=\{2,4,\ldots\}$ and $B=2\mathbb{N}_0-1=\{1,3,\ldots\}$. 

\medskip
It is clear that $A$ and $B$ are proximinal and the equalities $\delta(B)=1$, $\delta(A)=\frac{1}{2}$ and $dist(A,B)=0$ hold. Now note that 
$$dist(A,B)\leq\min\{\delta(B),\delta(A)\}$$ 
and $A_0=B_0=\varnothing$.
\end{example}

\begin{remark}
The ultrametric that we use in Example \ref{CEx} was apparently first constructed by Delhommé, Laflamme, Pouzet and Sauer \cite[Proposition 2]{ref201}. This construction is very often useful in the study of various topological and geometrical properties
of ultrametric spaces \cite{ref202,ref203,ref204,ref205} and has a natural generalization to Priess-Crampe and Ribenboim Ultrametrics with totally ordered range sets (see \cite[Proposition 4.10]{ref206}).
\end{remark}

\begin{lemma}\label{Lemma23}
Let $A$ and $B$ satisfy the conditions of Lemma \ref{Lemma1} and let $b_0$ be an arbitrary point of $B$. Write $r := dist(A, B)$. Then we have
\begin{equation}\label{eq3}
  A_0=A\cap S(b_0,r),
\end{equation}
where
\begin{equation}\label{eq4}
  S(b_0 , r) := \{x \in M : d(b_0 , x) = r\}
\end{equation}
is the sphere with the radius $r$ and the center $b_0$.
\end{lemma}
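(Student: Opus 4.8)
The plan is to establish the set equality (\ref{eq3}) by a standard double-inclusion argument, with the entire content coming from equation (\ref{eqq1}) of Lemma \ref{Lemma1}, which tells us that for a fixed $a\in A$ the distance $d(a,b)$ is independent of the choice of $b\in B$. Under the hypotheses inherited from Lemma \ref{Lemma1} this is exactly the fact that collapses the family of distances from a given point of $A$ to the whole of $B$ into a single number, so that a single test point $b_0$ suffices to detect membership in $A_0$.

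First I would show $A_0\subseteq A\cap S(b_0,r)$. Taking an arbitrary $a\in A_0$, the definition of $A_0$ supplies some $y\in B$ with $d(a,y)=dist(A,B)=r$. Applying (\ref{eqq1}) with the two points $y,b_0\in B$ then yields $d(a,b_0)=d(a,y)=r$, so that $a\in S(b_0,r)$; since $a\in A$ as well, we conclude $a\in A\cap S(b_0,r)$.

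Conversely, to obtain $A\cap S(b_0,r)\subseteq A_0$, I would take $a\in A\cap S(b_0,r)$, so that $a\in A$ and $d(a,b_0)=r=dist(A,B)$. Because $b_0\in B$, the point $b_0$ itself serves as the witness required by the defining clause ``$d(a,y)=dist(A,B)$ for some $y\in B$'' in Definition \ref{def12}, whence $a\in A_0$. Combining the two inclusions gives (\ref{eq3}).

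I do not expect a genuine obstacle here: the result is essentially a reformulation of Lemma \ref{Lemma1}. The only point deserving care is the reverse inclusion, where one must explicitly use $b_0\in B$ so that the existential quantifier in the definition of $A_0$ is satisfied; the forward inclusion is where (\ref{eqq1}) does the real work, by letting us transfer the distance equality from the unknown witness $y$ to the chosen center $b_0$.
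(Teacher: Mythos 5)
Your proof is correct, and it reaches the conclusion by a cleaner route than the paper does. The paper splits into the cases $r=0$ and $r>0$, and for $r>0$ partitions $M$ into the open ball $B^\prime(b_0,r)$, the sphere $S(b_0,r)$, and the exterior $E(b_0,r)$, then shows $A_0$ meets neither the open ball (since $d(a,b_0)\geq dist(A,B)$ for all $a\in A$, which is (\ref{eq7})) nor the exterior (by a contradiction argument invoking (\ref{eqq1}) and Definition \ref{def12}); the trivial reverse inclusion $A\cap S(b_0,r)\subseteq A_0$ is left implicit there. Your double-inclusion argument needs no case split on $r$: the forward inclusion transfers the distance from the witness $y$ to $b_0$ via (\ref{eqq1}) in one step, which simultaneously rules out both the open ball and the exterior, and you make the reverse inclusion explicit, which is a point the paper glosses over. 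The only thing the paper's longer route buys is the intermediate identity (\ref{eq7}), i.e.\ $A\cap B^\prime(b_0,r)=\varnothing$, which is reused later (in the proof of Theorem \ref{Thm25}) to upgrade (\ref{eq3}) to $A_0=A\cap B(b_0,r)$; but that fact is immediate from $d(a,b_0)\geq dist(A,B)$ anyway, so nothing essential is lost in your version.
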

\begin{proof}
Suppose that $r = 0$. Then the inequality $\delta(B)\leq dist(A,B)$ implies $B = \{b_0\}$, where $\{b_0\}$ is the set containing the point $b_0$ only. Hence, by Definition \ref{def12},
\begin{equation}\label{eq5}
  A_0 = \{x \in A : d(x, b_0 ) = 0\}
\end{equation}
holds. Since $A_0$ is nonempty, from (\ref{eq5}) it follows that $A_0 =\{b_0\}$. The last equality and equality (\ref{eq4}) imply (\ref{eq3}) for $r = 0$.

\medskip
For $r > 0$ we consider the open ball 
$$B^\prime(b_0 , r) := \{x \in M : d(b_0 , x) < r\}$$
and the "exterior" $E(b_0 , r)$ of this ball,
$$E(b_0 , r) := \{x \in M : d(b_0 , x) > r\}.$$

It is clear that $M = B^\prime(b_0 , r) \cup S(b_0 , r) \cup E(b_0 , r)$. Consequently, the set $A_0$ can be represented as
$$A_0=A_0\cap M= [A_0\cap B^\prime(b_0 , r)] \cup [A_0\cap S(b_0 , r)] \cup [A_0\cap E(b_0 , r)].$$
Thus, equality (\ref{eq3}) holds if and only if
\begin{equation}\label{eq6}
  A_0\cap B^\prime(b_0 , r)=\varnothing=A_0\cap E(b_0 , r).
\end{equation}
To prove (\ref{eq6}), we first note that
$$d(a,b_0)\geq dist(a,B)\geq dist(A,B)=r$$
holds for every $a \in A$. Consequently, the intersection $A\cap B^\prime(b_0 , r)$ is empty,
\begin{equation}\label{eq7}
  A\cap B^\prime(b_0 , r)=\varnothing
\end{equation}
It implies $A_0\cap B^\prime(b_0 , r)=\varnothing$ because $A_0 \subseteq A$.

\medskip
If $A_0\cap E(b_0 , r)\neq\varnothing$, then there is $a_0\in A_0$ such that
\begin{equation}\label{eq8}
  d(a_0 , b_0 ) = r_1 > r = dist(A, B).
\end{equation}

It follows from the proof of Lemma \ref{Lemma1} (see (\ref{eqq1})) that
\begin{equation}\label{eq9}
  d(a_0 , b) = d(a_0 , b_0 )
\end{equation}
holds for every $b \in B$. Since $a_0 \in A_0$, there is $b^\prime\in B$ such that $d(b^\prime , a_0 ) = dist(A, B)$ by
Definition \ref{def12}. Using (\ref{eq8}), the last equality and equality (\ref{eq9}) with $b^\prime= b$, we obtain the
contradiction,
$$d(a_0,b_0)<dist(A,B)=d(b^\prime,a_0)=d(a_0,b_0),$$
that implies the second equality in (\ref{eq6}).
\end{proof}

The next lemma follows from \cite[Proposition 18.5]{ref26}.
\begin{lemma}\label{Lemma24}
Let $(M, d)$ be an ultrametric space and let $B$ be a ball in $(M, d)$. Then
\begin{equation}\label{eq10}
  d(x, b) = d(x, a)
\end{equation}
holds for every $x \in M \setminus B$ and all $a, b \in B$.
\end{lemma}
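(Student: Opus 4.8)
The plan is to reduce the statement to the isosceles triangle principle of ultrametric spaces—in any ultrametric triangle the two largest sides coincide—which follows from a double application of the strong triangle inequality to the three points $x$, $a$, $b$, once I know that $d(a,b)$ is strictly the shortest of the three sides.

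First I would record the elementary fact that in an ultrametric space every point of a ball is again a center of that ball. Writing $\rho$ for the radius of $B$ and $c$ for one of its centers, if $a\in B$ then for any $y\in B$ the strong triangle inequality gives $d(a,y)\le\max\{d(a,c),d(c,y)\}\le\rho$; combined with the reverse inclusion this yields $B=B(a,\rho)$ in the closed case, and the analogous identity in the open case. Two consequences follow at once: for all $a,b\in B$ we have $d(a,b)\le\rho$ (closed) or $d(a,b)<\rho$ (open), while $x\in M\setminus B$ means exactly $d(x,a)>\rho$ (closed) or $d(x,a)\ge\rho$ (open) for every $a\in B$. In either case I obtain the strict separation
\[
d(a,b)<d(x,a)\qquad\text{and}\qquad d(a,b)<d(x,b).
\]

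With these inequalities the conclusion is immediate. From $d(x,a)\le\max\{d(x,b),d(b,a)\}$ and $d(b,a)<d(x,a)$ the maximum cannot be attained at $d(b,a)$, so $d(x,a)\le d(x,b)$; the symmetric estimate $d(x,b)\le\max\{d(x,a),d(a,b)\}$ together with $d(a,b)<d(x,b)$ gives $d(x,b)\le d(x,a)$. Hence $d(x,a)=d(x,b)$, which is \eqref{eq10}.

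The argument is short, and the only point demanding care—really the sole obstacle—is the uniform handling of open and closed balls: one must verify that the separating inequality $d(a,b)<d(x,a)$ stays \emph{strict} in both cases. This is precisely why I isolate the center-freeness of balls at the outset; granting that, the result is a direct consequence of the ultrametric inequality.
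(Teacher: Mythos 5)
Your proof is correct: the reduction to the isosceles triangle principle via the observation that every point of an ultrametric ball is a center, with the strictness of $d(a,b)<d(x,a)$ checked separately for open and closed balls, is exactly the standard argument. The paper itself gives no proof of this lemma, merely citing \cite[Proposition 18.5]{ref26}, so your self-contained derivation is a welcome substitute and matches the classical reasoning behind that reference.
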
 

The following theorem provides a partial answer to Question \ref{Q1}.

\begin{theorem}\label{Thm25}
  Let $(A, B)$ be a proximinal pair in a nonempty ultrametric space $(M, d)$. Then the following statements are equivalent:
  \begin{itemize}
    \item[(i)] There is a point $a^\prime\in A$ such that
\begin{equation}\label{eq11}
d(a^\prime, b) = dist(A, B)  
\end{equation}
for every $b \in B$.
    \item[(ii)] The inequality $\delta(B)\leq dist(A, B)$ holds.
    \item[(iii)] The sets $A_0$ and $B_0$ are proximinal subsets of $(M, d)$, and the equalities $B_0 = B$ and $dist(A_0 , B_0 ) = dist(A, B)$ hold, and every $(a, b) \in A_0 \times B_0$ is a best proximity pair for both $(A, B)$ and $(A_0 , B_0 )$.         
  \end{itemize}
\end{theorem}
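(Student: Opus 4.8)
The plan is to close the cycle of implications $(i)\Rightarrow(ii)\Rightarrow(iii)\Rightarrow(i)$, so that all three statements become equivalent. The two short implications are $(i)\Rightarrow(ii)$ and $(iii)\Rightarrow(i)$, and essentially all of the work is concentrated in $(ii)\Rightarrow(iii)$. For $(i)\Rightarrow(ii)$ I would fix the point $a'\in A$ with $d(a',b)=dist(A,B)$ for every $b\in B$ and apply the strong triangle inequality to an arbitrary pair $b,b'\in B$ through $a'$: since $d(b,b')\le\max\{d(b,a'),d(a',b')\}=dist(A,B)$, taking the supremum over $b,b'\in B$ yields $\delta(B)\le dist(A,B)$. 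For $(iii)\Rightarrow(i)$ I would observe that $A_0$, being proximinal, is in particular nonempty, so I may pick any $a'\in A_0$; since $B_0=B$ and every pair of $A_0\times B_0$ is a best proximity pair for $(A,B)$, this gives $d(a',b)=dist(A,B)$ for all $b\in B$, which is exactly $(i)$.

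The heart of the argument is $(ii)\Rightarrow(iii)$. Assuming $\delta(B)\le dist(A,B)$ and writing $r:=dist(A,B)$, Lemma \ref{Lemma1} immediately gives $A_0\neq\varnothing$ and $B_0=B$, so $B_0=B$ is proximinal because $B$ is. Fixing $b_0\in B$, Lemma \ref{Lemma23} yields $A_0=A\cap S(b_0,r)$, and since (\ref{eq7}) shows that $A$ does not meet the open ball $B'(b_0,r)$, I may rewrite this as $A_0=A\cap \bar B(b_0,r)$, where $\bar B(b_0,r):=\{x\in M:d(x,b_0)\le r\}$ is the closed ball. This last identification is what lets the ultrametric ball lemmas do their job.

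The main obstacle is proving that $A_0$ itself is proximinal; here I would split on the position of $x\in M$ relative to $\bar B(b_0,r)$. If $d(x,b_0)>r$, then $x$ lies outside the closed ball $\bar B(b_0,r)\supseteq A_0$, so Lemma \ref{Lemma24} forces $d(x,a)=d(x,b_0)$ for every $a\in A_0$, and every point of the nonempty set $A_0$ is then a best approximation to $x$. If $d(x,b_0)\le r$, I would instead use the proximinality of $A$ to choose $a_0\in A$ with $d(x,a_0)=dist(x,A)$ and show that necessarily $a_0\in A_0$: otherwise $d(a_0,b_0)>r$, so the ultrametric isosceles property (applied to $d(x,b_0)\le r<d(a_0,b_0)$) gives $d(x,a_0)=d(a_0,b_0)>r$, whereas any $a_1\in A_0$ satisfies $d(x,a_1)\le\max\{d(x,b_0),d(b_0,a_1)\}\le r$, contradicting the minimality of $d(x,a_0)$. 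In either case a best approximation to $x$ lies in $A_0$, so $A_0$ is proximinal.

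It then remains to record the distance and best-proximity assertions, which follow quickly. From $A_0\subseteq A$ and $B_0=B$ one has $dist(A_0,B_0)\ge dist(A,B)$, while (\ref{eq9}) gives $d(a,b)=r$ for every $(a,b)\in A_0\times B_0$, forcing $dist(A_0,B_0)=r=dist(A,B)$. Because every pair in $A_0\times B_0$ realizes this common distance, the same computation shows that the analogues of $A_0$ and $B_0$ formed for the pair $(A_0,B_0)$ coincide with $A_0$ and $B_0$ themselves; hence each $(a,b)\in A_0\times B_0$ is simultaneously a best proximity pair for $(A,B)$ and for $(A_0,B_0)$, which completes $(ii)\Rightarrow(iii)$ and closes the cycle.
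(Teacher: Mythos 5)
Your proposal is correct and follows essentially the same route as the paper: the cycle $(i)\Rightarrow(ii)\Rightarrow(iii)\Rightarrow(i)$, with Lemma \ref{Lemma1} and Lemma \ref{Lemma23} reducing $A_0$ to $A\cap B(b_0,r)$ and the same case split on whether $x$ lies in that closed ball (the paper invokes Lemma \ref{Lemma24} and the diameter-of-a-ball fact where you use the isosceles property and the strong triangle inequality directly, but these are the same estimates). No gaps.
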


\begin{proof}
(i)$\Rightarrow$(ii). Let (\ref{eq11}) hold with a fixed $a^\prime\in A$ and all $b \in B$. Then, for arbitrary $b_1 , b_2 \in B$, the  strong triangle inequality and (\ref{eq11}) imply
$$d(b_1,b_2)\leq dist(A,B).$$
Statement (ii) follows.

\medskip
(ii)$\Rightarrow$(iii). Let $\delta(B)\leq dist(A, B)$ hold. Let us prove that $(A_0 , B_0 )$ is proximinal. By Lemma \ref{Lemma1}, the equality $B_0 = B$ holds. Thus, $B_0$ is proximinal. Hence, it suffices to show that $A_0$ is proximinal.

Let $b_0$ be an arbitrary point of $B_0$. By Lemma \ref{Lemma23}, we have
\begin{equation}\label{eq12}
  A_0 = A \cap S(b_0 , r)
\end{equation}
with $r = dist(A, B)$. Let us consider the closed ball $B(b_0 , r)$ with the center $b_0$ and the radius $r$, 
\begin{equation}\label{eq13}
  B(b_0 , r) = \{x\in M : d(b_0 , x) \leq r\} = B^\prime(b_0 , r) \cup S(b_0 , r)
\end{equation}
Using (\ref{eq12}), (\ref{eq13}) and equality (\ref{eq7}), we can represent the set $A_0$ as
\begin{equation}\label{eq14}
  A_0 = A \cap B(b_0 , r)
\end{equation}
By Definition \ref{def11}, $A_0$ is proximinal if for every $x^\prime\in M$ there is $a_0 \in A_0$ such that $d(x^\prime , a_0 ) = dist(x^\prime , A_0 )$. Let $x^\prime$ be an arbitrary point of $M$. If $x^\prime\notin B(b_0 , r)$, then
\begin{equation}\label{eq15}
  d(b_0,x^\prime)>r
\end{equation}
holds. Using (\ref{eq14}), (\ref{eq15}) and Lemma \ref{Lemma24}, we obtain that
\begin{equation}\label{eq16}
  d(a_0,x^\prime)=d(b_0,x^\prime)>r
\end{equation}
holds for every $a_0\in A_0$. Consequently, the equality 
\begin{equation}\label{eq17}
  d(x^\prime,a_0)=dist(x^\prime,A_0)
\end{equation}
holds for every $a_0\in A_0$.

Suppose now that $x^\prime \in B(b_0 , r)$. Since $A$ is proximinal, there exists $a^\prime\in A$ such that
\begin{equation}\label{eq18}
  d(x^\prime,a^\prime)=dist(x^\prime,A)
\end{equation}
It is easy to see that $a^\prime \in B(b_0 , r)$. Indeed, if $a^\prime\notin B(b_0 , r)$, then, similarly to (\ref{eq16}), we
obtain $d(x^\prime , a^\prime ) > r$.

Now if $a_0$ is an arbitrary point of $A_0$, then from (\ref{eq14}) and $x^\prime\in B(b_0 , r)$ it follows that $d(x^\prime , a_0 ) \leq r$. (The diameter of any ultrametric ball is less than or equal to its radius see \cite[Proposition 1.2]{ref16}.) Thus, we have the contradiction,
$$r < d(x^\prime, a^\prime ) = dist(x^\prime , A) \leq dist(x^\prime , A_0 ) \leq dist(x^\prime , a_0 ) \leq r$$
and, consequently, the equality $d(x^\prime , a^\prime ) = dist(x^\prime, A_0 )$ holds. Thus, $A_0$ is proximinal.

To complete the proof of validity of the implication (ii)$\Rightarrow$(iii), it suffices to note that
$$d(a_0 , b_0 ) = dist(A, B)$$
holds for every pair $(a_0 , b_0 )\in A_0 \times B_0$ by Lemma \ref{Lemma23}.

\medskip
(iii)$\Rightarrow$(i). If (iii) holds, then (\ref{eq11}) yields with any $a^\prime\in A_0$ for each $b\in B$ because $B_0 = B$.
\end{proof}

\medskip
Theorem \ref{Thm25} and Lemma \ref{Lemma23} completely describe the structure of the set of all best proximity pairs for $A$ and $B$ if $\delta(B) \leq dist(A, B)$. It seems to be interesting to find a generalization of these results for the case when the proximinal pair $(A, B)$ is arbitrary.

\medskip
Since every spherically complete set is proximinal, and every point of each ultrametric ball is a center of this ball, and every closed ball in spherically complete space is spherically complete, formula (\ref{eq14}) and Theorem \ref{Thm25} imply the following

\begin{corollary}\label{Cor27}
Let $(A, B)$ be a nonempty spherically complete pair in an ultrametric space $(M, d)$. If $\delta(B)\leq dist(A, B)$ holds, then $(A_0 , B_0 )$ is also a nonempty spherically complete pair in $(M, d)$.
\end{corollary}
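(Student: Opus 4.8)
The plan is to reduce everything to Theorem \ref{Thm25} together with the structural description $A_0 = A \cap B(b_0 , r)$ obtained in formula (\ref{eq14}), and then to recognize $A_0$ as a closed ball inside the ultrametric subspace $A$. First I would observe that, by Theorem \ref{KS1}, the spherical completeness of $A$ and of $B$ makes both of them proximinal, so that $(A,B)$ is a proximinal pair and Theorem \ref{Thm25} applies. Since the hypothesis provides statement (ii), namely $\delta(B)\leq dist(A,B)$, the equivalent statement (iii) holds; in particular $B_0 = B$, and $A_0$ is nonempty (the latter also following directly from Lemma \ref{Lemma1}). The equality $B_0 = B$ immediately yields that $B_0$ is spherically complete, because $B$ is spherically complete by hypothesis.

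The remaining and essential point is to prove that $A_0$ is spherically complete. Here I would use formula (\ref{eq14}): for an arbitrary $b_0 \in B$ and $r := dist(A,B)$ we have $A_0 = A \cap B(b_0 , r)$. Since $A_0 \neq \varnothing$, I would fix a point $a_0 \in A_0 \subseteq B(b_0 , r)$. Invoking the fact that every point of an ultrametric ball is a center of that ball, I would rewrite $B(b_0 , r) = B(a_0 , r)$ with the new center $a_0 \in A$. Consequently $A_0 = A \cap B(a_0 , r) = \{x \in A : d(a_0 , x) \leq r\}$, which is precisely the closed ball of radius $r$ centered at $a_0$ in the ultrametric subspace $A$.

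Finally, since every closed ball in a spherically complete space is itself spherically complete, and $A$ is spherically complete, the ball $A_0$ is spherically complete. Combining this with the spherical completeness of $B_0 = B$ gives that $(A_0 , B_0)$ is a nonempty spherically complete pair, as claimed.

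As for the main obstacle, the routine steps (applying Theorems \ref{KS1} and \ref{Thm25}, nonemptiness, and $B_0 = B$) are immediate. The one step that genuinely carries the argument is the \emph{re-centering}: recognizing $A_0$ not merely as the intersection of $A$ with an externally centered ball, but as a bona fide closed ball of the subspace $A$. This is exactly what permits transferring spherical completeness from $A$ to $A_0$, and it hinges on the ultrametric peculiarity that any point of a ball may serve as its center. Once $A_0$ is exhibited as a ball of $A$, the conclusion is forced.
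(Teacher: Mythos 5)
Your proposal is correct and follows essentially the same route as the paper, which derives the corollary from Theorem \ref{KS1}, Theorem \ref{Thm25}, formula (\ref{eq14}), the fact that every point of an ultrametric ball is a center of that ball, and the fact that closed balls in spherically complete spaces are spherically complete. Your write-up simply makes explicit the re-centering step that the paper leaves implicit.
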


The next theorem partially answers Question \ref{Q2} for nonexpansive mappings. 

\begin{theorem}\label{Thm1}
Let $A$ and $B$ be nonempty spherically complete sets in an ultrametric space $(M, d)$ and let $\delta(B) \leq dist(A, B)$ hold. Suppose $F: A \cup B \rightarrow A \cup B$ is a noncyclic nonexpansive mapping. Then there exists a best proximity pair $(a^*, b^* ) \in A\times B$ satisfying exactly one of the following statements:
  \begin{itemize}
    \item[(i)] $a^*$ and $b^*$ are fixed points of $F$.
    \item[(ii)] $a^*$ is a fixed point of $F$, and $B(b^*,d(b^*,F(b^*)))$ is a minimal $F$-invariant ball in $B$, each point of which is a nearest point to $a^*$.
    \item[(iii)] $b^*$ is a fixed point of $F$ in $B$, and $B(a^*,d(a^*,F(a^*)))$ is a minimal $F$-invariant ball in $A$, and each point of which is a nearest point to $b^*$.
    \item[(iv)] $B(a^*,d(a^*,F(a^*)))$ and $B(b^*,d(b^*,F(b^*)))$ is a minimal $F$-invariant balls in $A$ and, respectively, in $B$, and in addition, every pair 
    $$(x,y)\in B(a^*,d(a^*,F(a^*)))\times B(b^*,d(b^*,F(b^*)))$$
    is a a best proximity pair for $B(a^*,d(a^*,F(a^*)))$ and $B(b^*,d(b^*,F(b^*)))$
  \end{itemize}  
\end{theorem}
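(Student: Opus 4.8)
The plan is to reduce the statement to two independent applications of Theorem \ref{KS} — one on $A_0$, one on $B$ — and then to read off all four conclusions from a single distance identity. First I would record the structural consequences of the hypothesis $\delta(B)\le dist(A,B)$. By the computation (\ref{eqq1}) in the proof of Lemma \ref{Lemma1}, for every $a\in A$ the number $d(a,b)$ is independent of $b\in B$; fix $b_0\in B$, write $\phi(a):=d(a,b_0)$ and set $r:=dist(A,B)$. Then $\phi\ge r$ on $A$, Lemma \ref{Lemma23} gives $A_0=\{a\in A:\phi(a)=r\}$, and Lemma \ref{Lemma1} gives $B_0=B$; in particular $d(x,y)=r$ for every $(x,y)\in A_0\times B$. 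This identity is the engine of the whole argument. I would also note $\delta(A_0)\le r$: any two points of $A_0$ lie on $S(b_0,r)$, so the strong triangle inequality bounds their distance by $r$.

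Next I would show that $F$ restricts to a noncyclic nonexpansive self-map of $A_0\cup B$. Invariance of $B$ is given. For $A_0$, since $F(b_0)\in B$ we have $\phi(F(a))=d(F(a),F(b_0))$, whence nonexpansiveness yields $\phi(F(a))\le d(a,b_0)=\phi(a)$; combined with $\phi\ge r$ this forces $\phi(F(a))=r$ whenever $a\in A_0$, i.e. $F(A_0)\subseteq A_0$. By Corollary \ref{Cor27} the set $A_0$ is spherically complete, and $B$ is spherically complete by hypothesis, so Theorem \ref{KS} applies to each of $F|_{A_0}:A_0\to A_0$ and $F|_{B}:B\to B$. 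Each application yields either a fixed point or a minimal $F$-invariant ball; by the Remark after Definition \ref{Def18} a minimal invariant ball is never a one-point fixed-point set, so the four combinations of these dichotomous outcomes give precisely the mutually exclusive cases (i)--(iv).

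It then remains to verify the geometric assertions, and here everything collapses onto $d(x,y)=r$ for $x\in A_0$, $y\in B$. In case (i) the fixed points $a^*\in A_0$, $b^*\in B$ satisfy $d(a^*,b^*)=r=dist(A,B)$, so $(a^*,b^*)$ is a best proximity pair. In case (ii), for every $y$ in the minimal ball $B(b^*,d(b^*,F(b^*)))\subseteq B$ one has $d(a^*,y)=r=dist(a^*,B)$, so each such $y$ is a nearest point to the fixed point $a^*\in A_0$; case (iii) is symmetric, using $dist(b^*,A)=r$. In case (iv), for $x\in B(a^*,d(a^*,F(a^*)))\subseteq A_0$ and $y\in B(b^*,d(b^*,F(b^*)))\subseteq B$ the distance $d(x,y)=r$ is constant, so the two balls are at distance $r$, each ball equals its own set of best-approximating points, and every pair in the product is a best proximity pair.

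The one point needing care — and the step I expect to be the main obstacle — is the bookkeeping that the minimal ball produced inside $A_0$ is genuinely a minimal $F$-invariant ball in $A$, as demanded in (iii) and (iv). For this I would use $\delta(A_0)\le r$, so the radius $d(a^*,F(a^*))\le r$; then any point of $A$ within that radius of $a^*\in A_0$ again has $\phi=r$ and hence lies in $A_0$, which gives $B(a^*,d(a^*,F(a^*)))\cap A=B(a^*,d(a^*,F(a^*)))\cap A_0$. Consequently the invariance $F(B)\subseteq B$ and the constant-displacement property $d(y,F(y))=d(a^*,F(a^*))$ transfer verbatim, identifying the ball as minimal $F$-invariant in $A$. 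Equivalently, one may apply Theorem \ref{KS} in $A$ starting from a point of $A_0$, whose defining ball then automatically lies in $A_0$ by the same radius estimate.
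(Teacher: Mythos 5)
Your proposal is correct and follows essentially the same route as the paper: reduce to the $F$-invariant sets $A_0$ and $B_0=B$, apply Theorem \ref{KS} to the two restrictions, and then transfer the minimal ball from $A_0$ back to $A$. The only (cosmetic) differences are that you prove $F(A_0)\subseteq A_0$ via the function $\phi(a)=d(a,b_0)$ rather than via pairs, and you justify the ball transfer by the radius estimate $d(a^*,F(a^*))\le\delta(A_0)\le r$ instead of the paper's observation that $A_0=A\cap B(b_0,r)$ is itself a closed ball in $A$; both arguments rest on the same fact.
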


\begin{proof}

It follows from Definition \ref{Def18} that Statements (i)--(iv) are pairwise inconsistent (see Remark \ref{Rem19}). Therefore, it is enough to find a best proximity pair $(a^*, b^* ) \in A\times B$ for which at least one of statements (i)--(iv) is fulfilled.

\medskip
By Corollary \ref{Cor27}, $A_0$ and $B_0$ are nonempty spherically complete subsets of $(M, d)$. Let $x\in A_0$ and $y \in B_0$ be arbitrary. Then, using the implication (ii)$\Rightarrow$(iii) from Theorem \ref{Thm25},
we obtain the equality
\begin{equation}\label{eqv420}
  dist(A, B) = d(x, y).
\end{equation}

Since $F$ is nonexpansive, equality (\ref{eqv420}) implies the inequality

\begin{equation}\label{eqv421}
d(F(x), F (y)) \leq dist(A, B).
\end{equation}

The mapping $F$ is noncyclic, thus, we have $F(x) \in A$ and $F(y) \in B$. Consequently, the inequality $dist(A, B) \leq d(F(x), F (y))$ holds. The last inequality and (\ref{eqv421}) imply
$$d(F(x), F(y)) = dist(A, B).$$

Thus, we have $(F(x), F(y)) \in A_0 \times B_0$ for every $(x, y) \in A_0 \times B_0$. It implies the inclusions $F(A_0 ) \subseteq A_0$ and $F(B_0 ) \subseteq B_0$.

\medskip
If $dist(A, B) = 0$ holds, then from $\delta(B) \leq dist(A, B)$ it follows that $B$ is a single-point set. Hence, there is a unique $b^*\in M$ such that $B = B_0 = \{b^*\}$. Now the inclusion $F(B_0 ) \subseteq B_0$ gives us $F(b^* ) = b^*$. Similarly, by Lemma \ref{Lemma23}, for the case $dist(A, B) = 0$, we can find a unique $a^*\in A$ such that $A_0 = \{a^*\}$. The last inequality and the inclusion $F(A_0 ) \subseteq A_0$ imply $F(a^*) = a^*$. Thus, Statement (i) holds if $dist(A, B) = 0$.

\medskip
Let us consider the case when $dist(A, B) > 0$. Using Statement (iii) of Theorem \ref{Thm25} and applying Theorem \ref{KS} to the mappings $F_{|A_0}$ and $F_{|B_0}$, where $F_{|A_0}$  and $F_{|B_0}$ are the restrictions of $F$ on $A_0$ and, respectively, on $B_0$, we see that the theorem is true when $A_0 = A$.

\medskip
To complete the proof, it suffices to show that every closed ball in $A_0$ is a closed ball in $A$. To see it, we note that
\begin{equation}\label{eqv422}
  A_0 = A \cap S(b_0 , r)
\end{equation}
holds by Lemma \ref{Lemma23}, when $S(b_0 , r) = \{x \in M : d(x_0 , x) = r\}$, $b_0 \in B$ and $r = dist(A, B)$.
Moreover, by (\ref{eq6}), we have the equality
\begin{equation}\label{eqv423}
  A_0 \cap B^\prime(b_0 , r)=\varnothing
\end{equation}
for $B^\prime(b_0 , r) = \{x\in M : d(b_0 , x) < r\}$, where $b_0$ and $r$ are the same as in (\ref{eqv422}). Now
equalities (\ref{eqv422}) and (\ref{eqv423}) imply
$$A_0 = A \cap B(b_0 , r),$$
i.e., $A_0$ is a closed ball in $A$. Since $A$ is a subspace of ultrametric space $(M, d)$ and $A_0 \subseteq A$ holds, every closed ball in the closed ball $A_0$ is also a closed ball in $A$.

\end{proof}

The following example shows that Theorem \ref{Thm1} cannot be strengthen by removing any of Statements (i)--(iv).

\begin{example}\label{ex29}
Let $A = \{a_1 , a_2\}$ and $B = \{b_1 , b_2\}$ be disjoint sets and let $d$ be an ultrametric on $A \cup B$ such that $d(a_1 , a_2 ) = d(b_1 , b_2 ) = 1$ and $d(a, b) = 2$ whenever $a \in A$ and $b \in B$. Let us consider the permutations

\begin{equation*}
  F_1=\begin{pmatrix}
a_1 & a_2 & b_1 & b_2 \\
a_1 & a_2 & b_1 & b_2 
\end{pmatrix}, \qquad F_2=\begin{pmatrix}
a_1 & a_2 & b_1 & b_2 \\
a_1 & a_2 & b_2 & b_1 
\end{pmatrix},
\end{equation*}
\begin{equation*}
  F_3=\begin{pmatrix}
a_1 & a_2 & b_1 & b_2 \\
a_2 & a_1 & b_1 & b_2 
\end{pmatrix}, \qquad F_4=\begin{pmatrix}
a_1 & a_2 & b_1 & b_2 \\
a_2 & a_1 & b_2 & b_1 
\end{pmatrix}.
\end{equation*}
Then every $F_i,$  $i = 1,\ldots, 4$, is a noncyclic nonexpansive mapping. In addition, $F_1$ satisfies Statement (i) but does not satisfy any of statements (ii), (iii) and (iv). $F_2$ satisfies Statement (ii) but does not satisfy any of statements (i), (iii) and (iv), and so on.
\end{example}

\begin{conjecture}
Statements (i)--(iv) from Theorem \ref{Thm1} remain valid for all nonempty spherically complete sets $A$ and $B$ even if the inequality $\delta(B) \leq dist(A, B)$ does not hold.
\end{conjecture}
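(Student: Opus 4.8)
\medskip
\noindent\emph{Proof proposal.} Write $r:=dist(A,B)$. The natural strategy is to reduce to Theorem \ref{Thm1} by replacing the global assumption $\delta(B)\le dist(A,B)$ with a local version valid inside $A_0$ and $B_0$. I would proceed in four steps. First, show that $A_0$ and $B_0$ are nonempty and spherically complete. Second, show $F(A_0)\subseteq A_0$ and $F(B_0)\subseteq B_0$. Third, split $A_0$ and $B_0$ into matched closed balls of radius $r$ and locate an $F$-invariant matched pair $(\alpha,\beta)$. Fourth, note that $\delta(\beta)\le r = dist(\alpha,\beta)$, so Theorem \ref{Thm1} applies to the noncyclic nonexpansive restriction $F\colon \alpha\cup\beta\to\alpha\cup\beta$ and delivers one of the statements (i)--(iv).

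Steps two and three require no extra hypothesis. For step two, given $x\in A_0$ pick $y\in B$ with $d(x,y)=r$; then $y\in B_0$, and since $F$ is noncyclic and nonexpansive one has $r\le d(F(x),F(y))\le d(x,y)=r$, so $F(x)\in A_0$ and $F(y)\in B_0$. For step three, the key observation is purely ultrametric: on $A_0$ the relation $d(a,a')\le r$ is an equivalence (transitivity being the strong triangle inequality), and each class equals $A\cap B(a,r)$, a genuine closed ball of radius $r$ in $A$ and hence spherically complete. The same holds in $B_0$, and the classes on the two sides are matched bijectively so that \emph{every} cross pair inside a matched pair $(\alpha,\beta)$ realises $r$. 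Since $F$ is nonexpansive it sends each class into a class and preserves the matching, thereby inducing a single self-map $\sigma$ on the common index set; step three then amounts to producing a fixed index $\sigma(i)=i$.

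The hard part is step one, and it is in fact where the statement breaks down. In Theorem \ref{Thm25} the inequality $\delta(B)\le dist(A,B)$ gave $A_0\neq\varnothing$ and $B_0=B$ for free (Lemma \ref{Lemma1}) and collapsed $A_0$ to a single closed ball; without it, the attainment of $r=dist(A,B)$ is no longer guaranteed. Concretely, in the spirit of Example \ref{CEx}, let $A=\{a_n\}_{n\ge 1}$ and $B=\{b_n\}_{n\ge 1}$ be disjoint countable sets and define an ultrametric on $M=A\cup B$ by $d(a_n,b_n)=1+2^{-n}$, and $d(u,v)=3$ for every other pair of distinct points $u,v$. Every triangle has either all three sides equal to $3$ or one ``short'' side $1+2^{-n}$ together with two sides equal to $3$, so the strong triangle inequality holds; moreover $A$ and $B$ are infinite equilateral spaces (all nonzero distances equal to $3$) and are therefore spherically complete. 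Here $dist(A,B)=\inf_n(1+2^{-n})=1$ is not attained, so $A_0=B_0=\varnothing$ and there is \emph{no} best proximity pair at all, whatever noncyclic nonexpansive $F$ (for instance the identity) one takes. Thus statements (i)--(iv) fail, and the conjecture is false as stated.

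This shows that the hypothesis $\delta(B)\le dist(A,B)$ cannot simply be dropped: some substitute guaranteeing the attainment of $dist(A,B)$ --- at the very least the requirement $A_0\neq\varnothing$ --- is indispensable. Under such a corrected hypothesis the reduction above becomes the right line of attack, and the remaining difficulty migrates to step three, where producing an $F$-invariant matched pair (a fixed index $\sigma(i)=i$) would require $A_0$ itself, and not merely each of its classes, to be spherically complete, so that Theorem \ref{KS} can be applied to $F|_{A_0}$ --- a property that the condition $\delta(B)\le dist(A,B)$ also secured for free by collapsing $A_0$ to a single ball.
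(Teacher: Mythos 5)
The statement you were asked about is labelled a \emph{Conjecture} in the paper: the authors give no proof of it, so there is nothing to compare your argument against. What you have actually done is \emph{refute} the conjecture as literally stated, and your counterexample checks out. The metric with $d(a_n,b_n)=1+2^{-n}$ and all other nonzero distances equal to $3$ is a genuine ultrametric (any three points contain at most one ``short'' edge, since the short edges $\{a_n,b_n\}$ are pairwise disjoint, so the strong triangle inequality reduces to $1+2^{-n}\le 3$ and $3\le\max\{1+2^{-n},3\}$); the subspaces $A$ and $B$ are equilateral, hence every nested chain of their closed balls stabilizes and both are spherically complete; and $dist(A,B)=\inf_n(1+2^{-n})=1$ is not attained, so $A_0=B_0=\varnothing$ and the existence clause in the conclusion of Theorem \ref{Thm1} fails for every noncyclic nonexpansive $F$, e.g.\ the identity. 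This is a sharper observation than the paper's own Example \ref{CEx}, where $A_0=B_0=\varnothing$ also holds but $A$ and $B$ are \emph{not} spherically complete (the balls $\{m\in A: m\ge k\}$ form a nested chain with empty intersection), so that example does not touch the conjecture; yours does.

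Two caveats. First, a charitable reading of the conjecture is conditional: \emph{if} a best proximity pair exists (equivalently $A_0\neq\varnothing$), then some such pair satisfies one of (i)--(iv). Your counterexample does not address that version, and your reduction sketch for it is incomplete exactly where you say it is: $F$ permutes the matched $r$-classes $(\alpha_i,\beta_i)$ via some $\sigma$ on the index set, and nothing forces a fixed index, so Theorem \ref{Thm1} cannot yet be applied to a restriction $F\colon\alpha\cup\beta\to\alpha\cup\beta$ (which need not even be a self-map). Second, a small slip in your step three: you assert each class of $A_0$ is ``a genuine closed ball of radius $r$ in $A$ and hence spherically complete''; a closed ball of a spherically complete space is indeed spherically complete, but you should say so rather than derive it from being a ball alone. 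Neither caveat affects the main point: the conjecture, as written, is false, and you should present the counterexample as the answer rather than as an aside inside a proof attempt.
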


\begin{corollary}\cite[Theorem 11]{ref3}
Let $A$ be a nonempty spherically complete subspace of an ultrametric space $M$, and let $b^* \in M \setminus A$. Suppose $F : M \rightarrow M$ is a mapping for which $F(b^*) = b^*$. Also assume that $F$ is nonexpansive on $A \cup \{ b^* \}$ and that $A$ is $F$-invariant. Then $F$ has a fixed point in $A$ which is a nearest point of $b^*$ in $A$, or $A$ contains a minimal $F$-invariant set, each point of which is a nearest point to $b^*$ in $A$.
\end{corollary}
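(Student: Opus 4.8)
The plan is to obtain the statement as the specialization of Theorem~\ref{Thm1} to the case where $B$ is a one-point set. First I would put $B := \{b^*\}$ and verify the hypotheses of Theorem~\ref{Thm1}. The set $A$ is spherically complete by assumption, the singleton $B$ is trivially spherically complete, and since $\delta(B) = 0$ we have $\delta(B) \leq dist(A, B)$ automatically. Because $A$ is $F$-invariant and $F(b^*) = b^*$, the restriction of $F$ to $A \cup B$ maps this set into itself with $F(A) \subseteq A$ and $F(B) = B$, so it is noncyclic; it is nonexpansive on $A \cup \{b^*\} = A \cup B$ by hypothesis. Thus Theorem~\ref{Thm1} furnishes a best proximity pair $(a^*, b^*) \in A \times B$ satisfying exactly one of Statements (i)--(iv). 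Note that $dist(A, B) = dist(b^*, A)$, so the best proximity condition $d(a^*, b^*) = dist(A, B)$ says precisely that $a^*$ is a nearest point of $b^*$ in $A$.

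Next I would rule out Statements (ii) and (iv). Both assert the existence of a minimal $F$-invariant ball contained in $B$, namely $B(b^*, d(b^*, F(b^*)))$. But $F(b^*) = b^*$ forces $d(b^*, F(b^*)) = 0$, so this ``ball'' is the singleton $\{b^*\}$. By the Remark following Definition~\ref{Def18}, a one-point set consisting of a fixed point of $F$ can never be a minimal $F$-invariant ball, since such a ball is required to have positive radius. Hence (ii) and (iv) are impossible, and exactly one of (i) and (iii) holds.

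Finally I would translate the two surviving statements into the conclusion. Statement (i) gives a point $a^* \in A$ fixed by $F$ with $d(a^*, b^*) = dist(A, B) = dist(b^*, A)$; that is, $F$ has a fixed point in $A$ which is a nearest point of $b^*$, the first alternative of the corollary. Statement (iii) gives a minimal $F$-invariant ball $B(a^*, d(a^*, F(a^*))) \subseteq A$, each of whose points is a nearest point of $b^*$ in $A$; reading this ball as the required minimal $F$-invariant set yields the second alternative. Since the whole argument is a specialization of Theorem~\ref{Thm1}, no step is genuinely difficult; the only point needing care is the exclusion of (ii) and (iv), which rests entirely on the fact that a minimal $F$-invariant ball must have positive radius and so cannot be contained in the singleton $B = \{b^*\}$.
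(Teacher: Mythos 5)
Your proposal is correct and follows exactly the route the paper intends: the corollary is stated as a direct consequence of Theorem~\ref{Thm1}, obtained by taking $B=\{b^*\}$, for which $\delta(B)=0\leq dist(A,B)$ holds trivially and the best proximity condition reduces to nearness to $b^*$. Your observation that alternatives (ii) and (iv) are excluded because $F(b^*)=b^*$ forces $d(b^*,F(b^*))=0$, while Definition~\ref{Def18} requires a minimal $F$-invariant ball to have positive radius, is precisely the detail needed to reduce the four alternatives of Theorem~\ref{Thm1} to the two in the corollary.
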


Next, we derive some future fixed point results partially answering Question \ref{Q2}.

\begin{theorem}\label{Thm2}
 Let $A$ and $B$ be nonempty spherically complete subsets of an ultrametric space $(M, d)$ and let $\delta(B) \leq dist(A, B)$ hold. Suppose $F: A \cup B \rightarrow A \cup B$ is a noncyclic nonexpansive mapping satisfying
  $$\liminf_{n\rightarrow \infty}\ d(F^n(x),F^{n+1}(x))< d(x,F(x))$$

whenever $x$ in $M$ and $x \neq F(x)$. Then, there exist $a\in A$ and $b\in B$ such that
$$F(a)=a, \quad F(b)=b\quad \text{and}\quad d(a,b)=dist(A,B).$$
\end{theorem}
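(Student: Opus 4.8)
The plan is to combine the structural information extracted in the proof of Theorem~\ref{Thm1} with the fixed point criterion of Theorem~\ref{w-regular}. Since $F$ is noncyclic we have $F(A)\subseteq A$ and $F(B)\subseteq B$, so $F_{|A}$ and $F_{|B}$ are nonexpansive self-maps of the spherically complete spaces $A$ and $B$. The liminf hypothesis is assumed for all $x\in M$ with $x\neq F(x)$, hence in particular for every point of $A$ and of $B$; because the orbit of such a point stays inside $A$ (respectively inside $B$) by invariance, the quantities $d(F^n(x),F^{n+1}(x))$ are unchanged upon restriction, and so $F_{|A}$ and $F_{|B}$ inherit exactly the hypotheses of Theorem~\ref{w-regular}.

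Next I would recall from the proof of Theorem~\ref{Thm1} that, writing $r=dist(A,B)$ and fixing any $b_0\in B$, the set $A_0=A\cap B(b_0,r)$ is an $F$-invariant closed ball in $A$, that $A_0$ is spherically complete (a closed ball in a spherically complete space is spherically complete), and that $B_0=B$. Applying Theorem~\ref{w-regular} to $F_{|A}$ on the $F$-invariant closed ball $A_0$ yields a point $a\in A_0$ with $F(a)=a$. Applying Theorem~\ref{w-regular} to $F_{|B}$ — for instance on the closed ball $B(b_1,d(b_1,F(b_1)))$ for an arbitrary $b_1\in B$, which is $F$-invariant by nonexpansiveness — yields a point $b\in B=B_0$ with $F(b)=b$. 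Thus we have produced fixed points $a\in A_0$ and $b\in B_0$.

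Finally, since $(a,b)\in A_0\times B_0$, the best proximity structure of Theorem~\ref{Thm25}(iii), equivalently equality~(\ref{eqv420}) from the proof of Theorem~\ref{Thm1}, gives $d(a,b)=dist(A,B)$, which completes the three required conclusions $F(a)=a$, $F(b)=b$ and $d(a,b)=dist(A,B)$. I expect the only delicate point to be forcing the fixed point in $A$ to lie in $A_0$ rather than elsewhere in $A$: this is precisely why one must invoke Theorem~\ref{w-regular} on the distinguished $F$-invariant closed ball $A_0$, and why the preliminary identification of $A_0$ as an $F$-invariant spherically complete closed ball carrying the proximity equality is indispensable. Everything else reduces to checking that the hypotheses of Theorem~\ref{w-regular} pass to the restrictions, which is routine given the invariance of $A$ and $B$.
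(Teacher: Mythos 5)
Your proposal is correct and follows essentially the same route as the paper: establish that $A_0$ and $B_0$ are nonempty, spherically complete and $F$-invariant (the paper cites Corollary \ref{Cor27} and the invariance argument from the proof of Theorem \ref{Thm1}, while you use the equivalent closed-ball description $A_0=A\cap B(b_0,r)$), apply Theorem \ref{w-regular} to obtain fixed points $a\in A_0$ and $b\in B_0$, and conclude $d(a,b)=dist(A,B)$ via Theorem \ref{Thm25}. The extra care you take to force the fixed point into $A_0$ by invoking Theorem \ref{w-regular} on that specific $F$-invariant ball is exactly the step the paper handles by restricting $F$ to $A_0$ and $B_0$ directly.
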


\begin{proof}
By Corollary \ref{Cor27} $A_0$ and $B_0$ also are nonempty spherically complete subsets of $(M, d)$. As in the proof of Theorem \ref{Thm1}, we obtain that $F(A_0 ) \subseteq A_0$ and $F(B_0 ) \subseteq B_0$.

\medskip
Now the existence of $a \in A_0$ and $b \in B_0$ which satisfy $F(a) = a$ and $F(b) = b$ follows from Theorem \ref{w-regular}. The pair $(A, B)$ is proximinal by Theorem \ref{KS1}. Using Theorem \ref{Thm25}, we obtain $d(a, b) = dist(A, B)$ since $a \in A_0$ and $b \in B_0$.

\end{proof}

Since every strictly contractive mapping $F: M \rightarrow M$ is nonexpansive and has a unique fixed point if $M$ is spherically complete (see \cite{ref19}), we obtain the following.

\begin{theorem}\label{Thm3}
Let $A$ and $B$ be nonempty spherically complete subspaces of an ultrametric space $(M, d)$ and let $\delta(B)\leq dist(A,B)$ hold. If there is a noncyclic strictly
contractive $F: A \cup B \rightarrow A \cup B$, then this $F$ has a unique fixed point $p$ and the equalities
\begin{equation}\label{eqv424}
  B = B_0 = A_0 = \{p\}
\end{equation}
hold.
\end{theorem}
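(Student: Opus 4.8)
The plan is to combine the fixed point result already established in Theorem \ref{Thm2} with the strict inequality defining strict contractivity, which together force the two sets to collapse onto a single common point. First I would check that the hypotheses of Theorem \ref{Thm2} are met. A strictly contractive map is nonexpansive, and it also satisfies the $\liminf$ condition required there: if $x\in A\cup B$ and $x\neq F(x)$, then as long as consecutive iterates differ, strict contractivity applied to $u=F^{n-1}(x)$ and $v=F^{n}(x)$ gives $d(F^{n}(x),F^{n+1}(x))<d(F^{n-1}(x),F^{n}(x))$, so the sequence $\bigl(d(F^{n}(x),F^{n+1}(x))\bigr)_{n}$ is strictly decreasing and therefore $\liminf_{n\to\infty} d(F^{n}(x),F^{n+1}(x))\leq d(F(x),F^{2}(x))<d(x,F(x))$. (The orbit never leaves $A\cup B$, so no issue arises there.) Thus Theorem \ref{Thm2} applies and yields $a\in A$ and $b\in B$ with $F(a)=a$, $F(b)=b$ and $d(a,b)=dist(A,B)$.

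The one genuinely non-formal step, and the conceptual crux of the whole argument, is showing that $a=b$. Suppose $a\neq b$. Since both are fixed points, $d(a,b)=d(F(a),F(b))$, whereas strict contractivity gives $d(F(a),F(b))<d(a,b)$, a contradiction. Hence $a=b=:p$, and consequently $dist(A,B)=d(a,b)=d(p,p)=0$. This is the step where strict contractivity does real work beyond mere nonexpansiveness: it is precisely what rules out the configurations (ii)--(iv) of Theorem \ref{Thm1} and forces $A$ and $B$ to actually meet. I expect everything after this collapse to be routine bookkeeping with the earlier lemmas.

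Once $dist(A,B)=0$ is in hand, from $\delta(B)\leq dist(A,B)=0$ I get $\delta(B)=0$, so $B$ is a single point; since $p\in B$, this gives $B=\{p\}$, and then $B_{0}=B=\{p\}$ by Lemma \ref{Lemma1}. For $A_{0}$ I would invoke Lemma \ref{Lemma23} with $r=dist(A,B)=0$ and center $b_{0}=p$: the $r=0$ case treated in its proof shows $A_{0}=\{p\}$ (note $p=a\in A$, so indeed $p\in A\cap B$). Combining these gives $B=B_{0}=A_{0}=\{p\}$, which is exactly (\ref{eqv424}). Finally, uniqueness of the fixed point over $A\cup B$ is immediate: if $q\neq p$ were another fixed point, then $d(p,q)=d(F(p),F(q))<d(p,q)$, which is impossible. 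Hence $p$ is the unique fixed point of $F$, completing the proof. (An alternative to invoking Theorem \ref{Thm2} would be to use Corollary \ref{Cor27} to pass to the spherically complete pair $(A_{0},B_{0})$, apply the cited unique fixed point property of strictly contractive maps to $F_{|A_{0}}$ and $F_{|B_{0}}$, and then run the same $a=b$ argument via Theorem \ref{Thm25}(iii); this matches the motivating remark but is slightly longer.)
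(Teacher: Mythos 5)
Your proof is correct, but it reaches the fixed points by a different route than the paper. The paper's proof does not go through Theorem \ref{Thm2} at all: it applies the unique-fixed-point theorem for strictly contractive self-maps of spherically complete spaces (the fact from \cite{ref19} quoted just before the statement) separately to $F_{|A}$, $F_{|B}$, $F_{|A_0}$ and $F_{|B_0}$, uses uniqueness to identify the fixed point of $F_{|A}$ with that of $F_{|A_0}$ (and likewise for $B$), and then derives the contradiction from $d(a_0,b_0)=dist(A,B)$ via Theorems \ref{KS1} and \ref{Thm25}, much as you do. You instead verify that strict contractivity implies the $\liminf$ hypothesis of Theorem \ref{Thm2} --- your monotonicity argument is sound, including the degenerate case where the orbit stabilizes, since $x\neq F(x)$ already gives $d(F(x),F^2(x))<d(x,F(x))$ --- and then invoke that theorem to produce fixed points $a\in A$, $b\in B$ with $d(a,b)=dist(A,B)$ in one step, supplying uniqueness at the end by the standard two-fixed-points argument. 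What your route buys is independence from the external reference \cite{ref19}, everything being derived from results already proved in the paper, at the cost of routing through the heavier machinery of Theorems \ref{KS} and \ref{w-regular} hidden inside Theorem \ref{Thm2}. The collapse $a=b$, the identification $B=B_0=A_0=\{p\}$ via Lemma \ref{Lemma1} and the $r=0$ case of Lemma \ref{Lemma23}, and the final uniqueness step are all correct; your parenthetical ``alternative'' at the end is essentially the paper's actual proof.
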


\begin{proof}
Suppose that there is a noncyclic strictly contractive mapping $F : A \cup B \rightarrow A \cup B$. Then $F_{|A}$ and $F_{|B}$, the restrictions of $F$ on $A$ and, respectively, on $B$, are also strictly contractive. Since $A$ and $B$ are nonempty and spherically complete, we can find a unique $a^*\in A$ and a unique $b^*\in B$ such that  
\begin{equation}\label{eqv425}
  F_{|A}(a^*)=a^*\quad\text{and}\quad F_{|B}(b^*)=b^*.
\end{equation}
As noted in the proof of Theorem \ref{Thm1}, the inclusions $F(A_0 ) \subseteq A_0$ and $F(B_0 ) \subseteq B_0$ hold. Moreover, $A_0$ and $B_0$ are also nonempty and spherically complete by Corollary \ref{Cor27}. Hence, we can find the unique $a_0 \in  A_0$ and $b_0 \in  B_0$ such that
\begin{equation}\label{eqv426}
  F_{|A_0}(a_0)=a_0\quad\text{and}\quad F_{|B_0}(b_0)=b_0.
\end{equation}
Now the inclusions $A_0 \subseteq A$, $B_0 \subseteq B$, equalities (\ref{eqv425}), (\ref{eqv426}), and the uniqueness of $a^*$ and $b^*$ satisfying (\ref{eqv425}) imply the equalities $a^*= a_0$ and $b^*= b_0$. If $a^*$ and $b^*$ are distinct, then
\begin{equation}\label{eqv427}
  d(F(a_0),F(b_0))<d(a_0,b_0).
\end{equation}
because $F$ is strictly contractive. Using Theorems \ref{KS1} and \ref{Thm25}, and inequality (\ref{eqv427}), we obtain the contradiction,
$$dist(A, B) = d(a_0 , b_0 ) > d(F(a_0 ), F(b_0 )) > dist(A, B).$$
Thus, the point $b_0$ is a unique fixed point of $F : A \cup B \rightarrow A \cup B$ and $dist(A, B) = 0$ holds. To complete the proof, it suffices to note that (\ref{eqv424}) with $p = b_0$ now follows from $a_0 \in A_0$ and $b_0 \in B_0$ by Theorems \ref{KS1} and \ref{Thm25}.
\end{proof}

As a consequence of Theorem \ref{Thm3}, we obtain the following corollary that strengths Corollary 12 from \cite{ref3}.

\begin{corollary}
Let $A$ be a nonempty spherically complete subspace of an ultrametric space $M$ and let $F : M \rightarrow M$ be a mapping having a fixed point $b^* \in M$. Assume that $F$ is strictly contractive on $A \cup \{ b^*\}$ and $A$ is $F$-invariant. Then $b^*$ is a point in $A$.
\end{corollary}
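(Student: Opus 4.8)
The plan is to deduce this corollary directly from Theorem~\ref{Thm3} by taking $B$ to be the singleton $\{b^*\}$. First I would set $B := \{b^*\}$ and check that the pair $(A,B)$ satisfies every hypothesis of Theorem~\ref{Thm3}. The set $A$ is nonempty and spherically complete by assumption, and $B$, being a one-point set, is automatically nonempty and spherically complete: every closed ball of $B$ coincides with $\{b^*\}$, so any nested family of such balls has nonempty intersection. Moreover $\delta(B) = 0$, whence the inequality $\delta(B) \leq dist(A,B)$ holds trivially.

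Next I would verify that $F$ restricts to a noncyclic strictly contractive self-map of $A \cup B = A \cup \{b^*\}$. Since $A$ is $F$-invariant we have $F(A) \subseteq A$, and since $b^*$ is a fixed point of $F$ we have $F(B) = \{F(b^*)\} = \{b^*\} = B \subseteq B$; hence $F$ maps $A \cup B$ into itself and is noncyclic. Strict contractivity of $F$ on $A \cup \{b^*\}$ is precisely the stated hypothesis, so $F|_{A \cup B}$ is a noncyclic strictly contractive mapping.

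With all hypotheses in place, Theorem~\ref{Thm3} applies and yields a unique fixed point $p$ of $F|_{A\cup B}$ together with the equalities $B = B_0 = A_0 = \{p\}$. Since $B = \{b^*\}$, the first equality forces $p = b^*$, and the equality $A_0 = \{p\} = \{b^*\}$ then gives $b^* \in A_0$. As $A_0 \subseteq A$ by Definition~\ref{def12}, I conclude $b^* \in A$, as desired.

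I do not expect a genuine obstacle here: the entire content is packaged inside Theorem~\ref{Thm3}, and the remaining work reduces to confirming that the degenerate choice $B = \{b^*\}$ meets that theorem's requirements. The only point demanding a moment's care is the verification that $B$ is $F$-invariant, which is exactly where the hypothesis $F(b^*) = b^*$ enters, guaranteeing that $F$ is a bona fide noncyclic map on $A \cup B$ rather than a cyclic one.
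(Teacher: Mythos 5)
Your proposal is correct and follows exactly the route the paper intends: the corollary is stated as a direct consequence of Theorem~\ref{Thm3}, obtained by taking $B=\{b^*\}$, verifying the (trivially satisfied) hypotheses $\delta(B)=0\leq dist(A,B)$ and noncyclicity via $F(b^*)=b^*$, and reading off $b^*=p\in A_0\subseteq A$ from the conclusion $B=B_0=A_0=\{p\}$.
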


\bibliographystyle{plain}

\end{document}